\documentclass[a4paper,10pt]{article}
\usepackage{latexsym} 
\usepackage{amsmath,amssymb}
\usepackage{verbatim}
\usepackage[dvips]{graphicx}

\newtheorem{theorem}{Theorem}[section]

\newtheorem{corollary}[theorem]{Corollary}
\newtheorem{proposition}[theorem]{Proposition}

\newtheorem{preexample}[theorem]{Example}

\newtheorem{preremark}{Remark}
\newenvironment{remark}{\begin{preremark}\rm}{\end{preremark}}



\newcommand{\qed }{ \hfill $\Box$ }

\newcommand{\g}{\mathfrak{g}}
\newcommand{\m}{\mathfrak{m}}


\begin{document}
\begin{center}
{\Large Martingales in Reductive Homogeneous spaces\\}

\end{center}

\vspace{0.3cm}

\begin{center}
{\large  Sim\~ao N. Stelmastchuk}  \\

\textit{Departamento de  Matem\'atica, Universidade Estadual do Paran\'a,\\ 84600-000 -  Uni\~ao da Vit\' oria - PR,
Brazil. e-mail: simnaos@gmail.com}
\end{center}

\vspace{0.3cm}

\begin{abstract}
  The subject of this work is to study martingales in a reductive \linebreak homogeneous space with respect to a symmetric connection. Our \linebreak basic idea is to view  homogenous spaces as principal fiber bundles and, thus, to study martingales on homogeneous space with aid of horizontal \linebreak martingales on Lie group. Furthermore, using the stochastic logarithm we give a characterization of martingales on homogenous space. To end, we study the martingales in spheres $S^{n}$ and $SL(n,\mathbb{R})/SO(n,\mathbb{R})$, $n \geq 2$.
\end{abstract}

\noindent {\bf Key words:} Homogeneous space; martingales; stochastic analysis on manifolds

\vspace{0.3cm} \noindent {\bf MSC2010 subject classification:} 22F30, 58J65, 60H30, 60G48

\section{Introduction}

The study of martingales in Lie groups and homogeneous space is very rich. The reason for this is the deep connection between the differential geometry and  algebra of Lie groups and homogenous spaces. It allows us to describe some stochastic process in Lie group or homogeneous space from its Lie algebra. We refer the reader to Hakim-Dowek-L\'epingle \cite{hakim} and Liao \cite{liao} to understand as this is done.


In \cite{arnaudon2}, Arnaudon characterized the martingales in the reductive \linebreak homogenous space $G/H$ with respect to the Canonical connection. However, for an arbitrary $G$-invariant connection on $G/H$ the characterization of \linebreak martingales are not known. Therefore, in this work, the aim is to study the martingales with respect to an arbitrary symmetric $G$-invariant connection in $G/H$.

The basic idea is to consider the principal fiber bundle $G(G/H,H)$ such as example 11.1 described by Kobayshi-Nomizu in \cite{kobay}. Thus, the  It\^o's \linebreak stochastic exponential,  which the author defined in \cite{stelmastchuk}, is used to \linebreak characterize the horizontal martingales in $G$. Consequently, the martingales in $G/H$ are characterized. Furthermore, a useful way to study the \linebreak martingales in $G/H$ is obtained from the stochastic logarithm( see Theorem \ref{logarithmcondition}). To end, the latter allows us to study the martingales in symmetric spaces: $S^{n}$  and $SL(n,\mathbb{R})/SO(n,\mathbb{R})$, $n\geq 2$.

We organize this paper in the following: in section 2 we have compiled some facts on Stochastic Analysis on manifolds; in section 3 we introduce the geometric structure on $G(G/H,H)$, more specifically, we choose the kind of symmetric $G$-invariant connections on $G/H$ and left invariant connections on $G$ that we are going to use in the paper; In section 4 our main results are stated and proved; In section 5 applications are indicated.

\section{Stochastic calculus}

In this work we use freely the concepts and notations of Emery \cite{emery1} and Kobayashi and Nomizu \cite{kobay}. From now on the adjective smooth means $C^{\infty}$.

Let $(\Omega, \mathcal{F},(\mathcal{F}_{t})_{t\geq0}, \mathbb{P})$ be a probability space which satisfies the usual \linebreak hypotheses (see for example \cite{emery1}). Our basic assumption is that every stochastic process is continuous.

Let $M$ be a smooth manifold and $X_{t}$ a continuous stochastic process with values in $M$. We call $X_{t}$ a semimartingale if, for all $f$ smooth function, $f(X_{t})$ is a real semimartingale.

Let $M$ be a smooth manifold with a symmetric connection $\nabla^{M}$, $X_t$ a \linebreak semimartingale with values in $M$, $\theta$ a 1-form on $M$ and $b$ a bilinear form on $M$. We denote by $\int_0^t \theta d^{M} X_s$ the It\^o's integral of $\theta$ along $X_t$, by $\int_0^t \theta \delta X_s$ the Stratonovich's integral of $\theta$ along $X_t$ and by $\int_0^t b(dX_s,dX_s)$ the \linebreak quadratic integral of $b$ along $X_t$. We recall that $X_t$ is a $\nabla^{M}$-martingale if $\int_0^t \theta d^ {M} X_s$ is a real local martingale for any 1-form $\theta$ on $M$.

It is possible to show a formula to convert the Stratonovich's integral to the It\^o's integral:
\begin{equation}\label{conversion}
  \int_0^t\theta \delta X_s=\int_0^t\theta d^{M} X_s+\frac{1}{2}\int_0^t\nabla^M\theta\;(dX_s,dX_s).
\end{equation}

Let $M$ and $N$ be smooth manifolds endowed with symmetric connections $\nabla^{M}$ and $\nabla^{N}$, respectively, and $F:M \rightarrow N$ a smooth map. Catuogno in \cite{catuogno} shows the following version for the It\^o's formula on smooth manifolds, which will be said the geometric It\^o's formula:
\begin{equation}\label{ito-ito}
  \int_0^t\theta \;d^{N} F(X_s)=\int_0^tF^*\theta \;d^{M} X_s +\frac{1}{2}\int_0^t\beta_F^*\theta \;(dX_s,dX_s),
\end{equation}
where $\beta_F$ is the second fundamental form of $F$ and $\theta$ is a 1-form on $N$.

\section{Connections on homogeneous spaces}

In this section, we introduce the notations and results on Lie groups and homogeneous spaces that are necessary for us. We begin by introducing the kind of homogeneous spaces. The classical work here is \cite{helgason}.

Let $H$ be a closed Lie subgroup of $G$. Let $\mathfrak{g}$ and $\mathfrak{h}$ denote the Lie algebras of $G$ and $H$, respectively. We assume that the homogeneous space $G/H$ is reductive, that is, there is a subspace $\mathfrak{m}$ of $\mathfrak{g}$ such that $\mathfrak{g} = \mathfrak{h} \oplus \mathfrak{m}$ and $Ad(H)(\m)\subset \m$. Let us denote by $\pi$ the natural mapping of $G$ onto the space $G/H$ of the cosets $gH$, $g \in G$.
There is a neighborhood $U$ at $0$ such that  there is a diffeomorphism from $U$ into $N=\pi\exp(U)$.

For each $a \in G$ we denote the left translation $\tau_{a}: G/H \rightarrow G/H$ which is given by $\tau_{a}(gH) = agH$.  It is direct that if $L_{a}$ are the left translation on $G$, then $\pi \circ L_{a} = \tau_{a} \circ \pi$. Furthermore, since $L_{a}$ is a diffeomorphism, we have
\begin{equation}\label{decompositionTG}
  TG = TG_{\mathfrak{h}} \oplus TG_{\mathfrak{m}},
\end{equation}
where $TG_{\mathfrak{h}}:= \{ (L_{a})_{*e}\mathfrak{h}; \forall\, a \in G \}$ and  $TG_{\mathfrak{m}}:= \{ (L_{a})_{*e}\mathfrak{m};\forall\, a \in G \}$. We denote the horizontal projection of $TG$ onto $TG_{\mathfrak{m}}$ by $\mathbf{h}$.

Being $(G,G/H,H)$ a $H$-principal fiber bundle, Theorem 11.1 in \cite{kobay} assures that vertical part of the Maurer-Cartan on $G$, which is denoted by $\omega$, is a connection form associated to connection $TG_{\m}$ in (\ref{decompositionTG}). Thus the horizontal lift from $G/H$ into $G$ is denoted by $(\cdot)^h$.

Set $A \in \m$. The left invariant vector field $\tilde{A}$ on $G$ is given by $\tilde{A}(g) = L_{g*}A$ and the $G$-invariant vector field $A_{*}$ on $G/H$ is defined by $A_{*} =\tau_{g*}A$. It is clear that $\tilde{A}$ is the horizontal lift vector field of $A_{*}$.


If $\nabla^{G/H}$ is a $G$-invariant connection on $G/H$, then Theorem 8.1 in \cite{nomizu} assures the existence of a unique $Ad(H)$-invariant bilinear map $\beta: \m \times \m \rightarrow \m$ such that
\[
  (\nabla^{G/H}_{A_*}B_*)_{o} = \beta(A,B), \ \ A, B \in \m.
\]
We can construct a complete left invariant connection $\nabla^{G}$ on $G$ by taking its associated bilinear map $\alpha: \g \times \g \rightarrow \g$ satisfying \begin{equation}\label{horizontalcondition}
  \mathbf{h}\alpha(A,B) = \beta(A,B), \ \ \mathrm{for} \ \ A,B \in \m.
\end{equation}

\begin{proposition}\label{propconn1}
  Let $\nabla^{G/H}$ and $\nabla^{G}$ be connections on $G/H$ and $G$, respectively, satisfying the condition (\ref{horizontalcondition}). Then
  \begin{enumerate}
    \item For vector fields $X,Y$ on $G/H$ we have $\mathbf{h}(\nabla^{G}_{X^h}Y^h) = (\nabla^{G/H}_{X}Y)^h$.
    \item Denoting by $\beta_{\pi}$ the second fundamental form of $\pi$ it follows that \linebreak $\beta_{\pi}(U,V) = 0$, where $U,V$ are horizontal vector fields on $G$.
  \end{enumerate}
\end{proposition}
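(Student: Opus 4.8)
The plan is to derive both parts from the defining identity of an affine submersion with horizontal distribution, $\mathbf{h}(\nabla^{G}_{\tilde A}\tilde B)=\mathcal H(\nabla^{G/H}_{A_*}B_*)$ for $A,B\in\m$, by promoting it from the distinguished invariant frame to arbitrary vector fields. The organizing tool is that a basis $A_1,\dots,A_n$ of $\m$ yields, through $A_i\mapsto (A_i)_*$, a global frame of $T(G/H)$ whose horizontal lifts are precisely the left invariant fields $\tilde A_i$. Consequently every vector field on $G/H$ can be written $X=\sum_i f_i\,(A_i)_*$ with $f_i\in C^\infty(G/H)$, and its horizontal lift is $X^h=\sum_i (f_i\circ\pi)\,\tilde A_i$. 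I will also use repeatedly that $\mathbf h$ is $C^\infty(G)$-linear, that $\mathcal H$ satisfies $\mathcal H(fW)=(f\circ\pi)\,\mathcal H(W)$ with $\mathcal H((A_j)_*)=\tilde A_j$, that $\tilde A_j$ is already horizontal, and the $\pi$-relatedness $\tilde A_i(g\circ\pi)=((A_i)_* g)\circ\pi$.

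For part (1), I would set $X=\sum_i f_i\,(A_i)_*$ and $Y=\sum_j g_j\,(A_j)_*$ and expand each side. On $G$ the Leibniz rule gives $\nabla^{G}_{X^h}Y^h=\sum_{i,j}(f_i\circ\pi)\big[\tilde A_i(g_j\circ\pi)\,\tilde A_j+(g_j\circ\pi)\,\nabla^{G}_{\tilde A_i}\tilde A_j\big]$; applying $\mathbf h$, using that $\tilde A_j$ is horizontal and the defining identity $\mathbf h(\nabla^{G}_{\tilde A_i}\tilde A_j)=\mathcal H(\nabla^{G/H}_{(A_i)_*}(A_j)_*)$, turns the right side into an expression in $f_i,g_j,\tilde A_j$ and the lifted covariant derivatives. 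On $G/H$ I would expand $\nabla^{G/H}_X Y$ by the same Leibniz rule and apply $\mathcal H$, pulling functions back by $\pi$ and using $\mathcal H((A_j)_*)=\tilde A_j$. The tensorial terms agree termwise, and the two derivation terms match exactly because of the $\pi$-relatedness $\tilde A_i(g_j\circ\pi)=((A_i)_* g_j)\circ\pi$. Hence the two sides coincide.

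For part (2), since the second fundamental form of a map is tensorial, it suffices to treat $U=X^h$, $V=Y^h$ that are horizontal lifts. Unwinding the definition, $\beta_\pi(U,V)=\nabla^{\pi}_U(\pi_* V)-\pi_*(\nabla^{G}_U V)$, where $\nabla^{\pi}$ is the pullback connection on $\pi^*T(G/H)$. Because $\pi_* V=Y\circ\pi$, the first term equals $(\nabla^{G/H}_X Y)\circ\pi$. For the second term, $\pi_*$ annihilates the vertical part, so $\pi_*(\nabla^{G}_U V)=\pi_*\big(\mathbf h(\nabla^{G}_{X^h}Y^h)\big)$, which by part (1) equals $\pi_*\big(\mathcal H(\nabla^{G/H}_X Y)\big)=\nabla^{G/H}_X Y$ since $\pi_*\circ\mathcal H=\mathrm{id}$. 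The two terms cancel, giving $\beta_\pi(U,V)=0$.

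The main obstacle is the bookkeeping in part (1): one must check carefully that $\mathcal H$ and $\mathbf h$ interact correctly with multiplication by functions, introducing the pullback $f\mapsto f\circ\pi$, and, above all, that the non-tensorial derivation terms produced by the Leibniz rule on the two sides agree. This matching is exactly where the $\pi$-relatedness of the frames is used, and it is the only step that is more than formal manipulation; once it is settled, part (2) is a short deduction.
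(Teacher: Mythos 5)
Your argument is sound and follows essentially the same route as the paper: both rest on the identity $\mathbf{h}(\nabla^{G}_{\tilde A}\tilde B)=\mathcal H(\nabla^{G/H}_{A_*}B_*)$ for the invariant frame coming from a basis of $\m$ (you take it directly from the definition of affine submersion with horizontal distribution, while the paper re-derives it from the construction of $\nabla^{G}$ via the bilinear map $\alpha$), and both then extend to arbitrary vector fields; the extension by $C^{\infty}$-linearity, the Leibniz rule and the $\pi$-relatedness $\tilde A_i(g_j\circ\pi)=((A_i)_*g_j)\circ\pi$ is exactly the step the paper dismisses as ``a simple argument using properties of connection,'' and you usefully spell it out. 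Part (2) is in both cases an immediate consequence of part (1), with your reduction by tensoriality to horizontal lifts making precise why $\pi_*U$, $\pi_*V$ may be treated as vector fields on $G/H$. The one inaccuracy is your claim that $A_i\mapsto (A_i)_*$ yields a \emph{global} frame of $T(G/H)$: the fields $A_*=\tau_{g*}A$ are only well defined on the neighbourhood $N=\phi(U)$ of the origin (and $T(G/H)$ need not be trivial, e.g.\ $TS^{2}$), so the frame computation should be carried out locally on $N$ and then propagated to all of $G/H$ using the $G$-invariance of $\nabla^{G/H}$, the left invariance of $\nabla^{G}$ and the relation $\pi\circ L_a=\tau_a\circ\pi$ --- which is precisely what the paper does in the last sentence of its proof of item 1. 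Since the identity to be proved is local, this repair is routine and does not affect the substance of your proof.
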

\begin{proof}
  {\it 1.} Set $A, B \in \m$. Then there exist left invariant vectors fields $\tilde{A}, \tilde{B}$ on $\exp(U)$ and $G$-invariant vector fields $A_{*}, B_{*}$ on $N$. It is clear that $\tilde{A}, \tilde{B}$ are horizontal and $\pi_{*}(\tilde{A})= A_{*}$ and $\pi_{*}(\tilde{B}) = B_{*}$. By construction of $\nabla^{G}$, for $g \in \exp(U)$,
  \[
    \pi_{*}(\nabla^{G}_{\tilde{A}}\tilde{B})(g) = \pi_{*}L_{g*} \alpha(A,B) = \tau_{g*}\pi_{*}\alpha(A,B) = \tau_{g*}\beta(A,B) = (\nabla^{G/H}_{A_{*}}B_{*})(\pi(g)).
  \]
  This gives $\mathbf{h}(\nabla^{G}_{\tilde{A}}\tilde{B}) = (\nabla^{G/H}_{A_{*}}B_{*})^h$. A simple argument using properties of connections shows that the  result holds for any vector fields  $X, Y$ on $N$. The same conclusion can be drawn for any vector fields $X,Y$ on $G/K$, because it is sufficient to translate they to $N$ and to use the $G$-invariance of $\nabla^{G/H}$.\\
  {\it 2.} Given $U$ and $V$ horizontal vector fields on $G$, by definition of the second fundamental form, we have
  \[
    \beta_{\pi}(U,V) = \nabla^{G/H}_{\pi_{*}U}\pi_{*}V - \pi_{*}\nabla^{G}_{U}{V}=0,
  \]
  which follows from item {\it 1.}
  \qed
\end{proof}

From now on we make the following assumptions: the left invariants connections $\nabla^{G}$ on $G$ will be complete.

\section{Martingales in homogeneous space}

In this section, we want to characterize the martingales in a reductive homogenous space $G/H$. To do this, we first endow the Lie Groups $G$ with a left invariant connection $\nabla^{G}$ and $\mathfrak{g}$ with the flat connection $\nabla^{\mathfrak{g}}$.

In \cite{stelmastchuk}, the author defines the It\^o's stochastic exponential with respect to $\nabla^G$ and $\nabla^{\g}$ as the solution of the It\^o's  stochastic differential equation
\begin{equation}\label{exponential}
  d^{\nabla^{G}}X_{t} = L_{(X_{t})*}(e) dM_t,\ \ X_{0} = e,
\end{equation}
where $M_t$ is a semimartingale in $\g$. For simplicity, we call $e^{G}(M_t)$ of It\^o's exponential. In \cite{stelmastchuk}, we have the following results about the It\^o's exponential.

\begin{theorem}\label{teomart1}
  Given a semimartingale $X_t$ in $G$, there exists a unique semimartingale $M_t$ in $\g$ such that $X_t = e^{G}(M_t)$.
\end{theorem}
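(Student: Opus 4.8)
The plan is to construct an explicit inverse to the map $M\mapsto e^{G}(M)$ by integrating the Maurer--Cartan form against $X$, and to extract both existence and uniqueness from that single construction. Since $e^{G}(M)$ always starts at $e$, the statement concerns semimartingales with $X_{0}=e$, to which I restrict. Fix a basis $E_{1},\dots,E_{n}$ of $\g$, let $\tilde{E}_{i}$ be the associated left invariant frame on $G$ and $\theta^{1},\dots,\theta^{n}$ the dual left invariant coframe, i.e. the components of the Maurer--Cartan form $\Theta=\sum_{i}\theta^{i}\otimes E_{i}$, so that $\theta^{i}(\tilde{E}_{j})=\delta^{i}_{j}$ globally and, by left invariance, $\theta^{i}(L_{g*}(e)E_{j})=\delta^{i}_{j}$ for every $g$. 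By the definition of the It\^o exponential, $X=e^{G}(M)$ means precisely that, for every $\theta\in\Gamma(T^{*}G)$,
\[
  \int_{0}^{t}\theta\,d^{G}X=\int_{0}^{t}\sum_{i}\theta(\tilde{E}_{i}(X_{s}))\,dM^{i}_{s},\qquad M=\sum_{i}M^{i}E_{i}.
\]

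For uniqueness I would test this identity against $\theta=\theta^{j}$: since $\theta^{j}(\tilde{E}_{i})=\delta^{j}_{i}$, the right-hand side collapses, forcing $M^{j}_{t}=\int_{0}^{t}\theta^{j}\,d^{G}X$ under the normalization $M_{0}=0$. Thus $M$ is completely determined by $X$, which settles uniqueness and at the same time reveals the only possible candidate, namely the \emph{stochastic logarithm} $M:=\int_{0}^{\cdot}\Theta\,d^{G}X=\sum_{i}\bigl(\int_{0}^{\cdot}\theta^{i}\,d^{G}X\bigr)E_{i}$. This is a semimartingale in $\g$ because It\^o integrals of smooth $1$-forms along a semimartingale are real semimartingales.

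For existence I would take this $M$ and verify that it reproduces the defining identity for every $\theta$. Writing an arbitrary $1$-form in the global coframe as $\theta=\sum_{i}f_{i}\theta^{i}$ with $f_{i}:=\theta(\tilde{E}_{i})\in C^{\infty}(G)$, and using additivity of the It\^o integral in its $1$-form argument, the whole claim reduces to the single substitution identity
\[
  \int_{0}^{t}(f\,\theta^{i})\,d^{G}X=\int_{0}^{t}f(X_{s})\,d\!\left(\int_{0}^{s}\theta^{i}\,d^{G}X\right),\qquad f\in C^{\infty}(G).
\]
I expect this identity to be the main obstacle. I would prove it by passing to a local coordinate expression: combining the Stratonovich--It\^o conversion (\ref{conversion}) with the ordinary Stratonovich chain rule yields a representation $\int_{0}^{t}\eta\,d^{G}X=\int_{0}^{t}\eta_{\alpha}\,dX^{\alpha}+\tfrac{1}{2}\int_{0}^{t}\Gamma^{\gamma}_{\alpha\beta}\eta_{\gamma}\,d[X^{\alpha},X^{\beta}]$ for any $1$-form $\eta$; applying this to $\eta=f\theta^{i}$ and comparing it, term by term, with the real It\^o integral of $f(X)$ against $N^{i}:=\int_{0}^{\cdot}\theta^{i}\,d^{G}X$ gives the equality, since both sides pick up the same scalar factor $f(X_{s})$ in the drift and the quadratic part.

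Granting the substitution identity, summing over $i$ returns $\int_{0}^{t}\theta\,d^{G}X=\sum_{i}\int_{0}^{t}f_{i}(X_{s})\,dM^{i}_{s}=\int_{0}^{t}\sum_{i}\theta(\tilde{E}_{i}(X_{s}))\,dM^{i}_{s}$, which is exactly the assertion that $X$ solves (\ref{exponential}) with driving process $M$; by uniqueness of solutions to that equation, $X=e^{G}(M)$. Together with the uniqueness paragraph this proves the theorem. The two points needing care beyond the substitution identity are the global validity of the coframe expansion, which is automatic because $G$ is parallelizable by left translation so no partition of unity enters, and the normalization $X_{0}=e$, which is legitimate precisely because $\nabla^{G}$ is left invariant and hence its It\^o calculus is compatible with the group structure.
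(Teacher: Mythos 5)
The paper states this theorem without proof, importing it from \cite{stelmastchuk}, so there is no in-paper argument to compare against; your proposal is correct and follows the standard (and, judging by the cited source's It\^o-logarithm formalism, evidently the original) route: uniqueness by testing the defining identity of (\ref{exponential}) against the left-invariant coframe $\theta^{1},\dots,\theta^{n}$, existence by showing that $M=\int_{0}^{\cdot}\Theta\,d^{G}X$ drives $X$, with the only real work being the substitution identity $\int_{0}^{t}(f\theta^{i})\,d^{G}X=\int_{0}^{t}f(X_{s})\,d\bigl(\int_{0}^{s}\theta^{i}\,d^{G}X\bigr)$, which indeed holds by the local-coordinate expression of the It\^o integral since both the drift and the Christoffel correction pick up the same scalar factor $f(X_{s})$. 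The caveat you already flag is the right one: the statement implicitly assumes $X_{0}=e$ and normalizes $M_{0}=0$, without which existence (respectively uniqueness) fails.
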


\begin{theorem}\label{martingalesinG}
  Let $\nabla^{G}$ be a left invariant connection on $G$ and $\nabla^{\g}$ the flat connection on $\g$. The $\nabla^{G}$-martingales in $G$ are exactly the process $e^{G}(M_t)$ where $M_t$ is a local martingale in $\mathfrak{g}$.
\end{theorem}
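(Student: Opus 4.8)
The plan is to prove the two inclusions separately, reducing everything to the defining It\^o equation (\ref{exponential}) together with the fact that, for the flat connection $\nabla^{\g}$, a semimartingale $M = \sum_i M^i e_i$ (in a fixed basis $\{e_i\}$ of $\g$) is a $\nabla^{\g}$-martingale precisely when each component $M^i$ is a real local martingale. The crucial preliminary step is to unwind what equation (\ref{exponential}) says at the level of It\^o integrals of $1$-forms. Writing $X = e^{G}(M)$, I would record that $d^{\nabla^{G}}X_t = L_{(X_t)*}(e)\,dM_t$ means, for every $\theta \in \Gamma(T^*G)$,
\begin{equation*}
  \int_0^t \theta\, d^{G}X = \sum_i \int_0^t \theta\bigl(\tilde{e}_i(X_s)\bigr)\, dM^i_s,
\end{equation*}
where $\tilde{e}_i(g) = L_{g*}e_i$ are the left invariant vector fields and the right-hand integrals are ordinary (flat) It\^o integrals. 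This identity is exactly the content of the It\^o exponential, and it is where the argument really lives.

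For the forward inclusion, suppose $M$ is a local martingale on $\g$, so that every $M^i$ is a real local martingale. Each integrand $s \mapsto \theta(\tilde{e}_i(X_s))$ is continuous and adapted, hence locally bounded and predictable, so each $\int_0^t \theta(\tilde{e}_i(X_s))\,dM^i_s$ is a real local martingale; summing, $\int_0^t \theta\, d^{G} X$ is a real local martingale for every $\theta$. By definition $X = e^{G}(M)$ is then a $\nabla^{G}$-martingale.

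For the reverse inclusion, let $X$ be a $\nabla^{G}$-martingale. By Theorem \ref{teomart1} there is a unique semimartingale $M$ in $\g$ with $X = e^{G}(M)$, and $M_0 = 0$ because $X_0 = e$. To see that this $M$ is a local martingale I would feed the left invariant coframe into the displayed identity: let $\eta^i$ be the left invariant $1$-forms dual to the $\tilde{e}_j$, so that $\eta^i(\tilde{e}_j(g)) = \delta^i_j$ for all $g$. Taking $\theta = \eta^i$ collapses the sum and gives $\int_0^t \eta^i\, d^{G} X = M^i_t$. The left-hand side is a real local martingale because $X$ is a $\nabla^{G}$-martingale, hence each $M^i$, and therefore $M$, is a local martingale on $\g$. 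Combining the two inclusions yields the stated characterization.

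The step I expect to be the main obstacle is the first one: pinning down rigorously that equation (\ref{exponential}) is equivalent to the integral identity above, including the fact that no quadratic (second-order) correction survives on the $\g$-side thanks to the flatness of $\nabla^{\g}$. Once this translation is secured both directions are short, the reverse direction hinging only on the observation that the left invariant coframe inverts the bundle map $L_{(X_t)*}(e)$ globally.
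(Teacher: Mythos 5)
Your argument is correct: the translation of equation (\ref{exponential}) into the integral identity $\int_0^t \theta\, d^{G}X = \sum_i \int_0^t \theta(\tilde{e}_i(X_s))\, dM^i_s$ is exactly what the It\^o equation means in Emery's second-order formalism (with no correction term surviving on the $\g$-side because $\nabla^{\g}$ is flat), and the two inclusions then follow as you describe, the reverse one via Theorem \ref{teomart1} and the dual left invariant coframe. Note that the paper itself offers no proof of this theorem --- it is quoted from \cite{stelmastchuk} --- and your argument is essentially the standard one carried out there, so there is nothing further to compare.
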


Our first purpose is to apply Theorems \ref{teomart1} and \ref{martingalesinG} in the study of horizontal martingales in $G$. In fact, our idea is to view $\pi:G \rightarrow G/H$ as a $H$-principal fiber bundle and to use the horizontal lift of semimartingales due to Shigegawa in \cite{shig}. In a nutshell, if $X_{t}$ is a semimartingale in $G/H$, there is a unique horizontal lifting $Y_{t}$ in $G$ such that $\pi(Y_{t}) = X_{t}$ and $\int_0^t \omega \delta Y_{s} = 0$ (see Theorem 2.1 in \cite{shig}), where $\omega$ is the vertical part of Maurer-Cartan form on $G$ associated with the horizontal distribution $TG_{\m}$ (\ref{decompositionTG}). A $\nabla^{G}$-martingale $Y_{t}$ is called a horizontal martingale if it satisfies $\int_0^t \omega \delta Y_{s} = 0$.

The next Proposition allows us to consider only  $\nabla^{G/H}$-martingales with initial condition $o$, that is, $X_{0}=o$, where $o = H$ is the origin in $G/H$.

\begin{proposition}
  Let $\nabla^{G/H}$ and $\nabla^{G}$ be connections on $G/H$ and $G$, respectively, satisfying the condition (\ref{horizontalcondition}). If $X_{t}$ is a $\nabla^{G/H}$-martingale in $G/H$ such that $X_{0} = \pi(Y_{0})$, where $Y_{0}$ is random variable in $G$, then so is $Z_{t}= \tau_{Y_{0}^{-1}}X_{t}$.
\end{proposition}
\begin{proof}
  Let $X_{t}$ be a $\nabla^{G/H}$-martingale and $Y_{t}$ its horizontal lift in $G$. Taking a 1-form $\theta$ on $G/H$ we get
  \[
  \int_0^t \theta d^{G/H}Z_{s}  =   \int_0^t \theta d^{G/H}\tau_{Y_{0}^{-1}}X_{s} = \int_0^t \theta d^{G/H}\tau_{Y_{0}^{-1}}\pi(Y_{s}) = \int_0^t \theta d^{G/H}\pi(L_{Y_{0}^{-1}}Y_{s}).
  \]
  From the geometric It\^o's formula (\ref{ito-ito}) and Proposition \ref{propconn1} we see that
  \begin{eqnarray*}
    \int_0^t \theta d^{G/H}Z_{s}
    & = & \int_0^t \pi^{*}\theta d^{G}(L_{Y_{0}^{-1}}Y_{s}) + \frac{1}{2}\int_0^t \beta_{\pi}^*\theta (d(L_{Y_{0}^{-1}}Y_{s}),d(L_{Y_{0}^{-1}}Y_{s}))\\
    & = & \int_0^t \pi^{*}\theta d^{G}(L_{Y_{0}^{-1}}Y_{s}).
  \end{eqnarray*}
  Proposition 3.2 in \cite{stelmastchuk} now assures that
  \[
    \int_0^t \theta d^{G/H}Z_{s} =  \int_0^t \theta \pi_{*} L_{Y_{0}^{-1}*}d^{G}Y_{s} = \int_0^t \theta \tau_{Y_{0}^{-1}*}\pi_{*} d^{G}Y_{s}.
  \]
  Again, from the geometric It\^o's formula (\ref{ito-ito}) and Proposition \ref{propconn1} we conclude that
  \[
    \int_0^t \theta d^{G/H}Z_{s}  = \int_0^t \tau_{Y_{0}^{-1}}^{*}\theta d^{G/H}\pi(Y_{s})
    = \int_0^t \tau_{Y_{0}^{-1}}^{*}\theta d^{G/H}X_{s}.
  \]
  Since $X_{t}$ is a $\nabla^{G/H}$-martingale, it follows that $Z_{t}$ is a $\nabla^{G/H}$-martingale. \qed
\end{proof}

We can now rephrase Theorem \ref{martingalesinG} as follows.

\begin{proposition}\label{horizontalmartingale}
  Let $\nabla^{G/H}$ and $\nabla^{G}$ be connections on $G/H$ and $G$, respectively, satisfying the condition (\ref{horizontalcondition}). If $Y_{t}$ is a horizontal martingale in $G$, then $Y_t = \exp(M_t)$ for some semimartingale $M_t \in \m$.
\end{proposition}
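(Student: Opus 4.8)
The plan is to apply to $Y$ the general Itô logarithm/exponential correspondence of the previous section, obtaining a local martingale $M$ in $\g$, and then to use horizontality to show that the $\h$-component of $M$ is forced to vanish, so that in fact $M$ takes values in $\m$. (I read the conclusion $Y_t=\exp(M_t)$ as $Y_t=e^{G}(M_t)$, the Itô exponential of equation (\ref{exponential}).) First, using the preceding Proposition I may assume $X_0=o$, so that the horizontal lift starts at $Y_0=e$. A horizontal martingale is in particular a $\nabla^{G}$-martingale, so Theorem \ref{martingalesinG} produces a local martingale $M$ in $\g$ with $Y_t=e^{G}(M_t)$ and $M_0=0$, unique by Theorem \ref{teomart1}. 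Writing $M=M_{\h}+M_{\m}$ for the decomposition along $\g=\h\oplus\m$, the whole problem reduces to proving $M_{\h}\equiv 0$.

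Second, I would identify $M_{\h}$ with an Itô integral of the connection form. Because $\omega$ is the $\h$-part of the left-invariant Maurer--Cartan form, we have $\omega\big(L_{Y_s*}(e)v\big)=[v]_{\h}$ for $v\in\g$; applying $\omega$ to the defining equation (\ref{exponential}) therefore gives $\int_0^t\omega\,d^{G}Y_s=M_{\h}(t)$. Converting to Stratonovich by (\ref{conversion}) componentwise and using that $Y$ is horizontal, i.e. $\int_0^t\omega\,\delta Y_s=0$, I obtain
$$ M_{\h}(t)=\int_0^t\omega\,d^{G}Y_s=-\frac{1}{2}\int_0^t\nabla^{G}\omega\,(dY,dY)_s . $$
So it suffices to show that this quadratic integral vanishes.

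Third, I compute $\nabla^{G}\omega$ on horizontal directions. For $A,B\in\m$ with left-invariant fields $\tilde{A},\tilde{B}$, the function $\omega(\tilde{B})=[B]_{\h}=0$ is constant and $\nabla^{G}_{\tilde{A}}\tilde{B}$ is the left-invariant field of $\alpha(A,B)$, so
$$ (\nabla^{G}\omega)(\tilde{A},\tilde{B})=\tilde{A}\big(\omega(\tilde{B})\big)-\omega\big(\nabla^{G}_{\tilde{A}}\tilde{B}\big)=-[\alpha(A,B)]_{\h}=-[\beta(A,B)]_{\h}=0, $$
since $\alpha$ extends $\beta$ and $\beta$ is $\m$-valued. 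Thus $\nabla^{G}\omega$ annihilates every pair of horizontal vectors.

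Finally --- and this is the main obstacle --- I must show that the quadratic integral of a bilinear form vanishing on horizontal--horizontal pairs is zero when $Y$ is horizontal. The idea is to read the Stratonovich condition $\int_0^t\omega\,\delta Y=0$ as a statement about the martingale part of $Y$: it forces the covariation of the vertical coordinates of $Y$ with every coordinate to vanish, so that the quadratic variation of $Y$ is carried entirely by $TG_{\m}\otimes TG_{\m}$. Combined with the previous step this makes $\int_0^t\nabla^{G}\omega\,(dY,dY)=0$, whence $M_{\h}\equiv 0$. Consequently $M$ is a local martingale with values in $\m$ and $Y=e^{G}(M)$, the uniqueness being that of Theorem \ref{teomart1}. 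The delicate point throughout is precisely this passage from the Stratonovich horizontality condition to the horizontality of the quadratic variation, which is what guarantees that the Itô and Stratonovich vertical logarithms of $Y$ coincide and hence both vanish.
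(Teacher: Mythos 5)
Your setup coincides with the paper's: both proofs write $Y=e^{G}(N)$ for a local martingale $N$ in $\g$ via Theorems \ref{teomart1} and \ref{martingalesinG}, identify the $\h$-component of $N$ with $\int_0^t\omega\,d^{G}Y_s$, and use the Stratonovich--It\^o conversion (\ref{conversion}) together with $\int_0^t\omega\,\delta Y_s=0$ to obtain
\[
 N_{\h}(t)=-\frac{1}{2}\int_0^t\nabla^{G}\omega\,(dY,dY)_s .
\]
The divergence, and the gap, is in what you do next. The paper is finished at this line: the right-hand side is a continuous finite-variation process starting at $0$, while the left-hand side is a continuous local martingale (each $\h$-coordinate $N^{j}$ of the local martingale $N$ is a real local martingale); a continuous local martingale of finite variation vanishing at $0$ is identically zero, so $N_{\h}\equiv 0$. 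You never invoke the local-martingale property of $N_{\h}$ at this stage. Instead you try to prove that the quadratic integral itself vanishes, via the claim that horizontality ``forces the covariation of the vertical coordinates of $Y$ with every coordinate to vanish.'' As written this is an assertion, not an argument --- you yourself flag it as ``the main obstacle'' and ``the delicate point'' --- and the natural way to justify it is to observe that $\int_0^t\omega\,d^{G}Y_s=N_{\h}(t)$ is of finite variation, hence has zero covariation with everything. But once you know $N_{\h}$ is of finite variation you should stop and apply the martingale argument directly; the detour through $\nabla^{G}\omega$ on horizontal pairs (your third step, which is correct but unnecessary) and through the horizontality of the quadratic variation adds nothing and leaves the proof incomplete where the paper's one-line observation closes it.

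A minor additional point: the conclusion of the proposition is $Y_t=e^{G}(M_t)$ with $M$ a local martingale in $\m$ (you read this correctly), and the uniqueness you cite from Theorem \ref{teomart1} is indeed how the paper pins down $M=\sum_{\kappa\le r}N^{\kappa}H_{\kappa}$. So the fix is purely local: replace your fourth step by ``$N_{\h}$ is a continuous local martingale equal to a finite-variation process null at $0$, hence $N_{\h}\equiv 0$,'' and the proof matches the paper's.
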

\begin{proof}
  Let $Y_{t}$ be a horizontal martingale in $G$. By Theorem \ref{teomart1}, there is a unique local martingale $N_t$ in $\mathfrak{g}$ such that $d^{G} Y_{t} = L_{Y_{t}*} dN_t$. Let $\{H_{1},\ldots, H_{n}\}$ be a basis on $\mathfrak{g}$ such that $\{H_{\kappa}, \kappa=1, \ldots, r\}$ is a basis for $\mathfrak{m}$.  If we write $N_t= \sum_{\kappa=1}^{r}  N^{\kappa}_tH_{\kappa} + \sum_{j=r+1}^{n} N^{j}_tH_{j}$, then $d^{G} Y_{t} = dN^{\kappa}_t U^{\kappa}_{t} + dN^{j}_tU^{j}_{t}$, where $U^{i}_{t} = L_{Y_{t}*}H_{i}, i=1,\ldots,n$. From formula to convert Stratonovich's integral to the It\^o's integral (\ref{conversion}) and the fact that $Y_t$ is a horizontal martingale we obtain
  \begin{eqnarray*}
    0 = \int_0^t \omega(\delta Y_{s})
    & = & \int_0^t \omega(d^{G} Y_{s}) +\frac{1}{2}\int_0^t \nabla^{G}\omega (dY_s,dY_s) \\
    & = & N^{j}_tH^{j} + \frac{1}{2}\int_0^t \mathbf{v}\nabla^{G}(dN_s,dN_s) \\
    & = & N^{j}_tH^{j} + \frac{1}{2}\int_0^t \mathbf{v}\alpha(dN_s,dN_s) \\
    & = & N^{j}_tH^{j} + \frac{1}{2}\int_0^t \alpha^{j}(dN_s,dN_s)H^{j},
  \end{eqnarray*}
  where $\alpha$ is the bilinear form associated to $\nabla^{G}$. Since $H_{r+1},\ldots, H_{n}$ are linearly independent, it follows that $N^{j}_t= - \frac{1}{2}\int_0^t \alpha^{j}(dN_s,dN_s)$. However, each $N^{j}_t$, $  j=r+1, \ldots, n$, is a real local martingale. We thus conclude that they are null. Therefore, it is sufficient to write $M_t = \sum_{\kappa=1}^{r}  N^{\kappa}_tH_{\kappa}$, and the proof is complete.
\end{proof}

As a direct consequence of the proof of the Proposition above we have a characterization for horizontal martingales.

\begin{corollary}\label{horinzontalito}
  Under hypothesis of Proposition \ref{horizontalmartingale}, a $\nabla^{G}$-martingale $Y$ in $G$ is horizontal if and only if $\int_0^t \omega d^{G}Y_s = 0$.
\end{corollary}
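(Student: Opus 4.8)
The plan is to prove the corollary by exploiting the Stratonovich–Itô conversion formula (\ref{conversion}) applied to the connection form $\omega$, which relates the two integrals by the correction term $\frac{1}{2}\int_0^t \nabla^G\omega\,(dY,dY)_s$. By definition, $Y$ is horizontal precisely when $\int_0^t \omega\,\delta Y_s = 0$, so the entire content of the statement reduces to showing that the quadratic correction term vanishes whenever $Y$ is a $\nabla^G$-martingale.

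First I would write down the conversion formula for $\omega$ along $Y$, namely
\[
  \int_0^t \omega\,\delta Y_s = \int_0^t \omega\,d^{G}Y_s + \frac{1}{2}\int_0^t \nabla^G\omega\,(dY,dY)_s,
\]
so that the two integrals $\int_0^t \omega\,\delta Y_s$ and $\int_0^t \omega\,d^{G}Y_s$ agree if and only if the correction term is zero. Next I would unwind the correction term exactly as in the proof of Proposition \ref{horizontalmartingale}: writing $d^{G}Y_t = L_{Y_t *}\,dN_t$ for the semimartingale $N$ provided by Theorem \ref{teomart1}, the quantity $\nabla^G\omega\,(dY,dY)_s$ becomes $\mathbf{v}\,\alpha(dN,dN)_s$, the vertical component of the bilinear form $\alpha$ evaluated on the quadratic variation of $N$. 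Since $\alpha$ restricts to the $\m$-valued form $\beta$ on $\m \times \m$ and $Y$ is horizontal, the analysis already carried out shows that the relevant $\h$-components vanish; for the forward direction this gives the Itô integral directly, and for the converse the same identity forces the Stratonovich integral to coincide with it.

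For the two implications: if $Y$ is horizontal then $\int_0^t \omega\,\delta Y_s = 0$ by definition, and the computation in Proposition \ref{horizontalmartingale} shows that along a horizontal $\nabla^G$-martingale the correction term vanishes (the vertical components $N^j$ are null), whence $\int_0^t \omega\,d^{G}Y_s = 0$ as well. Conversely, if $Y$ is a $\nabla^G$-martingale with $\int_0^t \omega\,d^{G}Y_s = 0$, I would argue that the quadratic correction term is again controlled by $\alpha$ and, because $N$ is a local martingale in $\g$ with no $\h$-drift to absorb, the term vanishes, so $\int_0^t \omega\,\delta Y_s = 0$ and $Y$ is horizontal.

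The main obstacle I anticipate is justifying rigorously that the quadratic correction term $\frac{1}{2}\int_0^t \mathbf{v}\,\alpha(dN,dN)_s$ vanishes in the converse direction. In the forward direction this falls out of the horizontality hypothesis and the martingale property of the $N^j$, but for the converse one must show that the vanishing of the Itô integral $\int_0^t \omega\,d^{G}Y_s$ together with the $\nabla^G$-martingale assumption forces both the vertical local-martingale part and the vertical quadratic part of $N$ to be zero simultaneously. The delicate point is that $\alpha$ on $\h$-components need not vanish a priori, so I would need to verify that the structure of $\alpha$ (inherited from the affine-submersion choice, with $\alpha = \beta$ on $\m\times\m$) guarantees no spurious vertical contribution survives, most cleanly by reusing the linear-independence argument on $H_{r+1},\dots,H_n$ exactly as in Proposition \ref{horizontalmartingale}.
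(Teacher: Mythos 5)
Your approach is the right one and is exactly the argument the paper leaves implicit: the corollary is read off from the displayed computation in Proposition \ref{horizontalmartingale}, where the Stratonovich--It\^o conversion gives $\int_0^t \omega\,\delta Y_s = \int_0^t \omega\,d^{G}Y_s + \frac{1}{2}\int_0^t \mathbf{v}\,\alpha(dN,dN)_s$ with $d^{G}Y = L_{Y*}dN$, and the forward direction follows from the fact that the vertical components $N^j$ are simultaneously local martingales and of finite variation, hence null. The one step you leave open --- the converse --- does not actually need the linear-independence argument you propose to reuse, and your intermediate paragraph is slightly circular there (you invoke ``$Y$ is horizontal'' while proving horizontality). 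The clean closure is: by definition of the It\^o exponential, $\int_0^t \omega\,d^{G}Y_s = \sum_{j>r} N^j_t H_j$ is precisely the vertical part of the driving process $N$, so its vanishing forces $N^j \equiv 0$ and hence $dN$ takes values in $\m$; then $\alpha(dN,dN) = \beta(dN,dN)$ because $\alpha$ extends $\beta$, and since $\beta$ is $\m$-valued its vertical projection is zero, so the correction term vanishes and $\int_0^t \omega\,\delta Y_s = 0$. (Note this uses that $\alpha\vert_{\m\times\m}$ is the $\m$-valued form $\beta$, which holds for the paper's construction of $\nabla^{G}$ but is not literally forced by the bare ``affine submersion with horizontal distribution'' hypothesis; the same looseness is present in the paper itself.)
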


We now relate the martingales in $G/H$ with the horizontal martingales in $G$.

\begin{proposition}\label{onetoonehorizontalmartingale}
  Let $\nabla^{G/H}$ and $\nabla^{G}$ be connections on $G/H$ and $G$, respectively, satisfying the condition (\ref{horizontalcondition}). A semimartingale $X_{t}$ in $G/H$ is a $\nabla^{G/H}$-martingale if, and only if, its horizontal lift $Y_t$  is a horizontal martingale in $G$.
\end{proposition}
\begin{proof}
  Let $X_{t}$ be a semimartingale in $G/H$ and $Y_{t}$ its horizontal lift in $G$. Consider a 1-form $\theta$ in $T^*(G/K)$. Suppose that $Y_t$ is a horizontal martingale. From Proposition \ref{propconn1} and the geometric It\^o's formula (\ref{ito-ito}) we obtain
  \begin{equation}\label{onetoonehorizontalmartingale1}
    \int_0^t \theta d^{G/H}X_{s} = \int_0^t \theta d^{G/H} \pi(Y_{s}) = \int_0^t ({\pi}^{*}\theta) d^{G}Y_{s}.
  \end{equation}
  Consequently, $X_t$ is a $\nabla^{G/H}$-martingale.

  Conversely, suppose that $X_t$ is a $\nabla^{G/H}$-martingale in $G/H$. Take $\eta$ in $\Gamma(T^*G)$. Since the $\mathcal{C}^{\infty}$-module $\Gamma(T^*G)$ is generated by $\omega$ and by the \linebreak differential forms $\pi^* \alpha$ with $\alpha \in \Gamma(T^*G/H)$, we have that $\eta$ is a linear \linebreak combination of  differential forms $f\pi^* \alpha$ and $h\omega$ with $f,h \in \mathcal{C}^{\infty}(G)$.  From \linebreak Corollary \ref{horinzontalito} we deduce that $\int_0^t h\omega d^{G}Y_s =\int_0^t h(Y_s) d(\int_0^s \omega d^{G}Y_r) = 0$. Then
  \[
    \int_0^t \eta d^{G}Y_s = \int_0^t f \pi^* \alpha d^{G}Y_s=\int_0^t (f \alpha) d^{G/H}X_s,
  \]
  where we used (\ref{onetoonehorizontalmartingale1}) in the last equality. Thus it shows that $Y_t$ is a horizontal martingale.\qed
\end{proof}

Since we know horizontal martingales in $G$, Proposition \ref{horizontalmartingale}, and we know the one-to-one correspondence between martingales in $G/H$ and horizontal \linebreak martingales in $G$, Proposition \ref{onetoonehorizontalmartingale}, we are in a position to describe martingales in $G/H$.

\begin{theorem}\label{martingalesinGK}
  Let $\nabla^{G/H}$ and $\nabla^{G}$ be connections on $G/H$ and $G$, respectively, satisfying the condition (\ref{horizontalcondition}). If a semimartingale $X_{t}$ is a $\nabla^{G/H}$-martingale in $G/H$, then it is written as $\pi(e^{G}(M_t))$, where $M_t$ is a local martingale in $\mathfrak{m}$.
\end{theorem}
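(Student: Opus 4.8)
The plan is to assemble this theorem directly from the two results that immediately precede it, since the statement is essentially a composition of the one-to-one correspondence with the structural description of horizontal martingales. First I would let $X_t$ be a $\nabla^{G/H}$-martingale in $G/H$ and take $Y_t$ to be its horizontal lift in $G$, which exists and is unique by the horizontal lifting theorem of Shigegawa quoted in the text. By Theorem \ref{onetoonehorizontalmartingale}, the hypothesis that $X_t$ is a $\nabla^{G/H}$-martingale is equivalent to $Y_t$ being a horizontal martingale in $G$, so I obtain a horizontal martingale $Y_t$ for free.

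The second step is to apply Proposition \ref{horizontalmartingale} to $Y_t$. That proposition tells us $Y_t = \exp(M_t)$ for a semimartingale $M_t$ in $\m$; more usefully, its proof shows that the unique local martingale $N_t \in \g$ with $d^{G}Y_t = L_{Y_t *}\,dN_t$ has vanishing $\h$-component, so that $N_t = M_t \in \m$ is itself a local martingale. In the notation of the It\^o exponential (\ref{exponential}), this says precisely that $Y_t = e^{G}(M_t)$ with $M_t$ a local martingale valued in $\m$. One should note that Theorem \ref{martingalesinG} is the clean packaging of this: a $\nabla^{G}$-martingale is exactly an $e^{G}$ of a local martingale in $\g$, and the horizontality forces that local martingale into $\m$.

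Finally I would apply $\pi$ to both sides. Since $Y_t$ is the horizontal lift of $X_t$, we have $\pi(Y_t) = X_t$ by definition of the lift, and therefore
\[
  X_t = \pi(Y_t) = \pi\bigl(e^{G}(M_t)\bigr) = \pi \circ e^{G}(M_t),
\]
with $M_t$ a local martingale in $\m$, which is exactly the claimed representation.

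I do not expect a serious obstacle here, as this is a corollary-style argument: the real content has already been discharged in Theorem \ref{onetoonehorizontalmartingale} and Proposition \ref{horizontalmartingale}. The only point requiring any care is to make sure that the semimartingale $M_t$ produced by Proposition \ref{horizontalmartingale} is genuinely a \emph{local martingale} in $\m$ and not merely a semimartingale; this is precisely why one invokes the stronger conclusion visible in the proof of that proposition (the $\h$-components $N^{j}_t$ vanish and the surviving $\m$-component is the local martingale $\sum_{\kappa} N^{\kappa}_t H_\kappa$) rather than the weaker statement in its formal wording. Keeping the distinction between "semimartingale" and "local martingale" straight is the one place where the argument could be made sloppily.
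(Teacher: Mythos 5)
Your proposal is correct and follows exactly the route the paper takes: invoke Theorem \ref{onetoonehorizontalmartingale} to pass to the horizontal martingale $Y_t$, invoke Proposition \ref{horizontalmartingale} to write $Y_t = e^{G}(M_t)$ with $M_t$ a local martingale in $\m$, and project by $\pi$. Your closing observation is also well taken --- the formal wording of Proposition \ref{horizontalmartingale} only promises a semimartingale and writes $\exp(M_t)$, whereas its proof (and the paper's own citation of it here) delivers the stronger conclusion $Y_t = e^{G}(M_t)$ with $M_t$ a local martingale, which is what the theorem actually needs.
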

\begin{proof}
  Let $X_{t}$ be a $\nabla^{G/H}$-martingale in $G/H$. Then Proposition \ref{onetoonehorizontalmartingale} \linebreak assures that its horizontal lift $Y_{t}$ is a horizontal martingale. Now $Y_{t}$ is  written  as \linebreak $Y_{t} = e^{G}(M_t)$ for a local martingale $M_t$ in $\m$, by Proposition \ref{horizontalmartingale}. We thus get $X_{t} = \pi (e^{G}(M_t))$.\qed
\end{proof}

Our next purpose is given other characterization of $\nabla^{G/H}$-martingales in $G/H$. Before we introduce the stochastic logarithm due to Hakim-Dowek-L\'epingle \cite{hakim}. In fact, if $Y_t$ is a semimartingale in $G$, then the stochastic logarithm of $Y_t$, denoted by $L(Y_{t})$, is the unique solution of the Stratonovich's stochastic differential equation
\[
  \delta L(Y_{t})= L_{(Y_{t})^{-1}*}(Y_{t}) \delta Y_{t} \ \ \textrm{and} \ \ Y_{0}=e.
\]
It is simple to see that $L(Y_{t}) = \int_{0}^{t}\omega_G \delta Y_{s}$, where $\omega_G$ is the Maurer-Cartan form on $G$.

\begin{theorem}\label{logarithmcondition}
  Let $\nabla^{G/H}$ and $\nabla^{G}$ be connections on $G/H$ and $G$, respectively, satisfying the condition (\ref{horizontalcondition}). Let $X_{t}$ be a semimartingale in $G/H$ and $Y_t$ its horizontal lift in $G$. Then $X_{t}$ is a $\nabla^{G/H}$-martingale if and only if
  \[
    L(Y_{t})+ \frac{1}{2}\int_{0}^{t}\beta(L(Y_{s}),L(Y_{s}))
  \]
  is a local martingale in $\m$.
\end{theorem}
\begin{proof}
  Let $X_{t}$ be a semimartingale in $G/H$ and $Y_t$ its horizontal lift in $G$. Proposition \ref{horizontalmartingale} assures that $X_t$ is a $\nabla^{G}$-martingale if and only if $Y_t$ is a horizontal martingale. But Proposition 5.1 in \cite{stelmastchuk} says that $Y_t$ is a horizontal martingale if and only if
  \[
    L(Y_{t})+ \frac{1}{2}\int_{0}^{t}\alpha(L(Y_{s}),L(Y_{s})).
  \]
  is a local martingale in $\g$. Taking in account that $L(Y_{s})$ belongs to $\m$ and the choice of the bilinear form $\alpha$ we obtain the proof.\qed
\end{proof}

\begin{corollary}
  Under the hypothesis of Theorem \ref{logarithmcondition}, if moreover $\beta(A,A) = 0$ for all $A \in \m$, then $X_t$ is a $\nabla^{G/H}$-martingale if and only if $L(Y_t)$ is a local martingale.
\end{corollary}

\section{Applications}

In this section, we wish to apply Proposition \ref{logarithmcondition} to characterize the martingales in $S^{n}$ and $Sl(n \mathbb{R}))/SO(n\mathbb{R})$, $n \geq 2$

\subsection{Martingales in the spheres $S^{n}, n \geq 2$}


We begin by recalling that the sphere $S^{n}, n\geq 2$, can be viewed as the \linebreak symmetric space $SO(n+1)/SO(n)$ with the usual metric induced of $\mathbb{R}^{n+1}$ (see for example Example 3.65(a) in \cite{warner}). In addition, $SO(n+1)/SO(n)$ is a \linebreak reductive  homogeneous space with the reductive decomposition given by \linebreak $\mathfrak{so}(n+1) = \mathfrak{so}(n) + \m$, where $\m$ is the subspace of all $n \times n$ matrices of the form
\begin{equation}\label{matrixm}
  \left(
  \begin{array}{cc}
    0 & -x^{t}\\
    x & 0_{n}
  \end{array}
  \right),
\end{equation}
where $x=(x_{1}, \ldots, x_{n})$ is a column vector in $\mathbb{R}^{n}$ and $0_{n}$ is the $n\times n$ zero matrix.


Let $X_{t} \in S^{n}$ be a semimartingale and $Y_t$ its horizontal lift in $SO(n+1)$. By Theorem \ref{logarithmcondition}, $X_t$ is a martingale in $S^{n}$, with respect to usual metric induced of $\mathbb{R}^{n+1}$, if the stochastic logarithm $L(Y_t)$ is a local martingale in $\m$. Here, \linebreak $L(Y_{t}) = \int_0^t \omega_{SO(N+1)}\delta Y_{s}$, where $\omega_{SO(N+1)}$ is the Maurer-Cartan form on \linebreak $SO(N+1)$. For simplicity, let us write $\omega_{SO}$ instead of $\omega_{SO(n+1)}$.


In order to know when $L(Y_{t})$ is a local martingale in $\m$ we adopt the matrix coordinate system. In fact, since $SO(n+1) \subset GL(n+1)$, we use the global coordinates of $GL(n+1)$ to represent the matrix in $SO(n+1)$, namely, if $A \in SO(n+1)$, then $A=(a_{ij})$, $i,j=1, \ldots n+1$.


Let us denote $E_{ij}$ the $n\times n$ matrix with value 1 in the $(i,j)$ entry, $i\neq j$. Then $\{E_{j1} - E_{1j},j=2, \ldots n+1\}$ is a basis of $\m$. Recalling (\ref{decompositionTG}) we write
\[
  TSO(n+1)= TSO(n+1)_{\mathfrak{so}(n)} \oplus TSO(n+1)_{\m},
\]
where $TSO(n+1)_{\m}(A)= \{AV: V \in \m\}$ for $A$ in $SO(n+1)$. Then using the matrix coordinate system we have that $\{\partial y_{j1}(A) - \partial y_{1j}(A), j=2, \ldots n+1\}$ is a basis for $TSO(n+1)_{\m}(A)$ and, consequently, $\{\frac{1}{2}(dy_{j1} - dy_{1j}), j=2, \ldots n+1\}$ is its the dual basis.


Taking a horizontal vector field $V$ on $SO(n+1)$, for $A \in SO(n+1)$, we have $V(A) \in TSO(n+1)_{\m}(A)$ and
\[
  V(A)= a_{12}(V(A))(\partial y_{21} - \partial y_{12}) + \ldots + a_{1(n+1)}(V(A))(\partial y_{(n+1)1} - \partial y_{1(n+1)}).
\]
where  $a_{1l}(V(A)) = \frac{1}{2}(dy_{j1} - dy_{1j})(V(A))$, for $j=2, \ldots, n+1$. Since $\omega_{SO}(A)(V)=AV$, it follows that
\[
  \omega_{SO}(V(A)) = a_{12}(V(A))(E_{21} - E_{12}) + \ldots + a_{1(n+1)}(V(A))(E_{(n+1)1} - E_{1(n+1)}).
\]
Write, in coordinates, $X_{t}=(X^{1}_t, \ldots, X^{n+1}_t)$ and $Y_t = (Y^{ij}_t)$. Thus, as $Y_t$ is a horizontal martingale we have
\[
  \int_0^t \omega_{SO} \delta Y_{s} =\sum_{j=2}^{n+1}\frac{1}{2}(X^{j}_t-Y^{1j}_t) (E_{j1} - E_{1j}),
\]
where we used the fact that $Y^{j1}_t= X^j_t$, $j=2, \ldots, n+1$.

\begin{proposition}
  Let $X_{t}=(X^{1}_t, \ldots, X^{n+1}_t)$ be a semimartingale in $S^{n}, n \geq 2$ and $Y_t = (Y^{ij}_t)$ its horizontal lift in $SO(n+1)$. Then $X_t$ is a martingale in $S^n$ with respect to the usual metric  induced of $\mathbb{R}^{n+1}$, if and only if,
  \[
    (X^2_t-Y^{12}_t, \ldots, X^{n+1}_t-Y^{1n+1}_t)
  \]
  are real local martingales.
\end{proposition}

\begin{remark}
 An analogous construction can be used to characterize the martingales in the Grassman manifold with metric given by Killing form.
\end{remark}

\subsection{Martingales in $Sl(n,{\Bbb R})/SO(n,{\Bbb R}), \, n\geq 2$}


Let $Sl(n,\mathbb{R})$ be the special linear group. Take ${\frak sl}(n, {\Bbb R})$ the Lie algebra of all $n \times n$ matrices over ${\Bbb R}$ with vanishing trace, this is the Lie algebra of ${\rm Sl}(n,\mathbb{R})$.  Choose its Cartan decomposition given by
\[
  {\frak sl}(n, {\Bbb R})={\frak so}(n,{\Bbb R}) \oplus {\frak s},
\]
where
\[
  {\frak so}(n,{\Bbb R})= \{ X \in {\frak sl}(n, {\Bbb R}): X^{t}=-X \} \mbox{ and } {\frak s}=\{X \in {\frak sl}(n, {\Bbb R}): X^{t}=X \}.
\]

Now take ${\frak a}$ a maximal abelian subalgebra of ${\frak s}$ given by all diagonal matrices  with vanishing trace. Also with canonical choices we    have the following Iwasawa decomposition
\[
  {\frak sl}(n, {\Bbb R})= {\frak so}(n,{\Bbb R}) \oplus {\frak a} \oplus {\frak n}^{+} ,
\]
where ${\frak n}^{+}$ is the subalgebra of all upper triangular matrices with null entries on the diagonal.

Hence, the Iwasawa decomposition of the Lie group ${\rm Sl}(n,\mathbb{R})$ is given by ${\rm Sl}(n,\mathbb{R})= {\rm SO}(n,\mathbb{R})  \cdot A \cdot N^{+}$, where
\[
  A = \left\{ \textrm{diag}(a_1, \ldots, a_{n-1},a_n) : a_1\cdot a_2\cdot \ldots \cdot a_{n-1}\cdot a_n = 1 \right\}
\]
and
\[
  N^{+} = \left\{(y_{ij}): y_{ii}=1, y_{ij} = 0,\, i>j,  y_{ij} \in {\Bbb R}, 1<i<j\leq n \right\} .
\]

It is well-known that $A = \mathrm{exp} {\frak a}$, $N^+ = \mathrm{exp}{\frak n}^{+}$ and that $Sl(n,{\Bbb R})/SO(n,{\Bbb R})$ is a symmetric space. Thus the Levi-Civita connection for any invariant metric on $Sl(n,{\Bbb R})/SO(n,{\Bbb R})$ is the Canonical connection (see for example Theorem 11.3 \cite{kobay}).

Let $X_t$ be a semimartingale in $Sl(n,{\Bbb R})/SO(n,{\Bbb R})$ and $Y_t$ its horizontal lift in $Sl(n,{\Bbb R})$. Theorem \ref{logarithmcondition} assures that $X_t$ is a martingale if and only if $L(Y_t)$ is a local martingale in $\m = \mathfrak{a} \oplus \mathfrak{n}^+$. Since $Y_t$ is a horizontal semimartingale in $Sl(n,{\Bbb R})$, we can write
\[
  Y_t =  A_t \cdot N_t,
\]
where $A_t \in A$ and $N_t \in N^+$. Differentiate the equality above with respect to the Stratonovich's differential we have to
\[
  \delta Y_t = L_{N_t*}\delta A_t \oplus R_{A_t*}\delta N_t.
\]
Thus we have that
\begin{equation}\label{eqslnson1}
  L(Y_t) =    L(A_t) \oplus \int_{0}^t Ad(A_s^{-1})\delta L(N_s),
\end{equation}
where we use the fact that $\omega R_{a*}(X) = Ad(a^{-1})\omega(X)$.

\begin{proposition}
  Let $X_t$ be a semimartingale in $Sl(n,{\Bbb R})/SO(n,{\Bbb R})$. Then $X_t$ is a martingale with respect to Canonical connection if, and only if, so is $A_t$ in $A$ and
  \[
   \int_{0}^t Ad(A_s^{-1})\delta L(N_s)
  \]
  is local martingale in $\mathfrak{n}^+$.
\end{proposition}
\begin{proof}
  It is a direct application of Theorem \ref{logarithmcondition} and equation (\ref{eqslnson1}). It is sufficient observe that $A_t$ is a semimartingale in $A \cdot N^+$. Since $\beta =0$, Proposition 5.1 in \cite{stelmastchuk} assures that $A_t$ is a martingale with respect to Canonical connection in $A$ if and only if $L(A_t)$ is a local martingale in $\mathfrak{a} \oplus \mathfrak{n}^+$. \qed
\end{proof}

In sequence, we adopt the following coordinate system on $A$ and $N^+$, \linebreak respectively,
\begin{equation}\label{coordinatesystem}
  \phi(g) = (a_1,a_2,\ldots,a_{n-1}), \ \ \ \psi(g) = (y_{12},y_{13}, \ldots,y_{n-1n}),
\end{equation}
which are global. We denote coordinate vector fields on $A$ and $N^+$, respectively, by $\frac{\partial}{\partial a_i}$ and $\frac{\partial}{\partial y_{kl}}$. Coordinate 1-forms on $A$ and $N^+$ are denoted by $da_i$ and $dy_{kl}$, respectively. We thus have
\[
  \delta A_t = da_i(\delta A_t)\dfrac{\partial}{\partial a_i} \ \ \textrm{and} \ \ \  \delta N_t = dy_{kl}(\delta N_t) \dfrac{\partial}{\partial y_{kl}}.
\]
Consequently,
\[
  L(A_t) = \int_{0}^t \omega \delta A_s = \int_0^t da_i(\delta A_s) E_i= A^{i}_s E_i,
\]
where $E_i$ is the matrix with values $1$ at $(j,j)$-entry, $-1$ at $(n,n)$-entry, for $j=1, \ldots n$, and $0$ and other entries. Here $\{E_1, \ldots, E_{n-1}\}$ is a basis of $\mathfrak{a}$.

We denote by $E_{kl}$ the matrix with values $1$ at $(k,l)$-entry and $0$ and other entries. It is clear that $\{E_{kl}; 1\leq k <l \leq n\}$ is a basis for $\mathfrak{n}^+$. Writing $a = (a^{ii}) \in A$ it follows that
\[
  Ad(a^{-1})(E_{kl}) =\dfrac{a^{ll}}{a^{kk}}E_{kl}.
\]
 Hence
\[
  L(N_t) = \int_0^t Ad(A_s^{-1})\omega \delta N_s = \left(\int_0^t \dfrac{A^{ll}_s}{A^{kk}_s}\delta N^{kl}_s\right) E_{kl}.
\]
It follows that
\begin{equation}\label{eqslnson2}
  L(Y_t) =  \sum_{i=1}^{n-1}A^{ii}_t E_i \oplus \sum_{1\leq k<l\leq n}\left(\int_0^t \dfrac{A^{ll}_s}{A^{kk}_s}\delta N^{kl}_s \right) E_{kl}.
\end{equation}

\begin{corollary}
  Let $X_t$ be a semimartingale in $Sl(n,{\Bbb R})/SO(n,{\Bbb R})$. Then $X_t$ is a martingale with respect to Canonical connection if and only if
  \[
    (A^{11}_t,\ldots, A^{(n-1)(n-1)}_t,-\sum_{i=1}^{n-1}A^{ii}_t) \in \mathbb{R}^n
  \]
  and
  \[
    \left(\int_0^t \dfrac{A^{22}_s}{A^{11}_s}\delta N^{12}_s,\ldots, \int_0^t \dfrac{A^{nn}(s)}{A^{11}_s}\delta N^{1n}_s, \ldots, \int_0^t \dfrac{A^{nn}_s}{A^{(n-1)(n-1)}_s}\delta N^{(n-1)n}_s\right) \in \mathbb{R}^{\frac{n(n-1)}{2}}
  \]
  are local martingales.
\end{corollary}

\end{document}